\newtheorem{theorem}{Theorem}[section]
\newtheorem{proposition}[theorem]{Proposition}
\newtheorem{corollary}[theorem]{Corollary}
\newtheorem{definition}{Definition}
\newtheorem{remark}{Remark}
\numberwithin{equation}{section}
\DeclareMathOperator{\Trace}{Trace}
\title{Measure Partitions via Fourier Analysis II: Center Transversality in the $L^2$-norm for Complex Hyperplanes} 
\author{Steven Simon}
\begin{document}

\maketitle

\begin{abstract} Applications of harmonic analysis on finite groups were recently introduced to measure partition problems, with a variety of equipartition types by convex fundamental domains obtained as the vanishing of prescribed Fourier transforms. Considering the circle group, we extend this approach to the compact Lie group setting, in which case the annihilation of transforms in the classical Fourier series produces measure transversality similar in spirit to the classical centerpoint theorem of Rado: for any $q\geq 2$, the existence of a complex hyperplane whose surrounding regular $q$-fans are close -- in an $L^2$-sense -- to equipartitioning a given set of measures. The proofs of these results represent the first application of continuous as opposed to finite group actions in the usual equivariant topological reductions prevalent in combinatorial geometry.\\ 
\\
{\footnotesize \subjclass{MSC (2010)\, 52A35 $\cdot$ 52A38 $\cdot$ 51M20 $\cdot$ 37F20 $\cdot$\,42A16}}
\end{abstract}

\section{Introduction and Main Results} 

	Partition of point collections, bodies, and other measures on Euclidian space have a long history in combinatorial and discrete geometry. On the one hand are the \textit{equipartition} problems, beginning with the well-known ham sandwich theorem: any $d$ finite absolutely continuous measures on $\mathbb{R}^d$ (henceforth to be called \textit{masses}) can be bisected by a single hyperplane. More generally, given $n$ masses $\mu_1,\ldots, \mu_n$ on $\mathbb{R}^d$, one seeks a partition $\mathcal{P}=\{\mathcal{R}_1,\ldots, \mathcal{R}_m\}$ by a fixed type of ``nice" convex regions  -- e.g., those determined by affine independent or pairwise orthogonal hyperplane collections [3, 4, 6, 11, 12, 18, 22, 28], arrangements by more general fans [1, 2, 17], cones on polytopes [15, 26, 31], et cetera -- so that each region contains an equal fraction of each total measure: $\mu_j(\mathcal{R}_i)=\mu_j(\mathbb{R}^d)/m$ for all $1\leq i \leq m$ and all $1\leq j \leq n$. On the other hand are the \textit{center-transversality} questions, in which the goal is to find an affine space $A$ of a specified dimension such that each partition $\mathcal{P}$ with center containing $A$ comes ``sufficiently close'' to equipartitioning each measure. For instance, the classical center-point theorem of Rado [21] claims for any mass $\mu$ on $\mathbb{R}^d$ the existence of  a point $p$ so that $|\mu(H^\pm)-\mu(\mathbb{R}^d)/2|\leq \mu(\mathbb{R}^d)/6$ for the half-spaces $H^{\pm}$ of any hyperplane $H$ passing through $p$, and moreover that the ratio 1/6 is minimal over all masses. 

\subsection{Finite Fourier Analysis and Equipartitions} As shown in [24], measure equipartition problems have a natural reformulation in terms of harmonic analysis on finite groups: if each partition $\mathcal{P}:=\{\mathcal{R}_g\}_{g\in G}$ is naturally indexed by a free group action (e.g., via isometries of $\mathbb{R}^d$), then evaluating measures for each partition determines maps $f_{\mu_j}: G\rightarrow \mathbb{R}, g\mapsto \mu_j(\mathcal{R}_g)$, the Fourier expansion  \begin{equation} \mu_j(\mathcal{R}_g)= \sum_\sigma d_\sigma \Trace(c_{j,\sigma} \sigma_g) \end{equation} of which is given explicitly in terms of the group's irreducible unitary representations $\sigma: G\rightarrow U(d_\sigma)$ and their matrix-valued  transforms [7]
	 \begin{equation} c_{j,\sigma}=\frac{1}{|G|}\sum_{g\in G} \mu_j(\mathcal{R}_g)\sigma_g^{-1} \in M(\mathbb{C}, d_\sigma) \end{equation} 

A variety of equipartition types could then be obtained as the vanishing of judiciously chosen $c_{j,\sigma}$, including full equipartitions when all transforms but that given by the trivial representation were annihilated. Many previously considered problems could be seen as special cases of this viewpoint, including the extensively studied Gr\"unbaum and ``generalized Makeev" hyperplane problems  mentioned above as $\mathbb{Z}_2^k$ examples, including the ham sandwich theorem when $k=1$. 

\subsection{Compact Groups and $L^2$ Center-Transversality} Considering the circle group $S^1$, we extend this harmonic analysis approach to the compact Lie group setting, in which case the vanishing of Fourier transforms will produce an ``average" measure center transversality, described precisely in terms of the $L^2$-norm on the group. First, we set some notation. 
	
\begin{definition} Let $q\geq 2$. A \textit{complex regular $q$-fan} $F_q$ in $\mathbb{C}^d$ is the union of $q$ half-hyperplanes, centered about a complex hyperplane $H_{\mathbb{C}}$, whose successive dihedral angles are all equal to $2\pi/q$. The resulting closed regions are called regular ($q$-)sectors. \end{definition}  
	
	The regular sectors of all complex regular $q$-fans $F_q$ centered about a fixed complex hyperplane are  parametrized by the free rotational $S^1$-action, so as before one has a natural class of convex decompositions $\mathcal{D}_q(S^1)=\{\mathcal{S}_{q,\lambda}\}_{\lambda\in S^1}$ and resulting maps $f_{\mu_j}: S^1\rightarrow \mathbb{R},\, \lambda \mapsto \mu_j(\mathcal{S}_{q,\lambda}),$ whose Fourier series \begin{equation} \mu_j(\mathcal{S}_{q,\lambda}) =\sum_{m\in\mathbb{Z}} c_{j,m} \lambda ^m \end{equation} 
converge uniformly because the measures are continuous. Technically speaking, our main result guarantees the vanishing of arbitrarily prescribed Fourier coefficients \begin{equation} c_{j,m}=\int_{S^1}\mu_j(\mathcal{S}_{q,\lambda})\lambda^{-m}d\lambda\in \mathbb{C}, \end{equation} and hence their conjugates $\overline{c_{j,m}}=c_{j,-m}$, whenever $m\neq 0$:

\begin{theorem} Let $q\geq 2$. For any $d$ masses on $\mathbb{C}^d$ and any $m_1,\ldots, m_d>0$, there exists a complex hyperplane whose surrounding regular $q$-sectors  $\{\mathcal{S}_{q,\lambda}\}_{\lambda \in S^1}$ satisfy $c_{j,m_j}=c_{j,-m_j}=0$ in (1.3) for each $1\leq j \leq d$. \end{theorem}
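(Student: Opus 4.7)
The plan is to cast the problem as an $S^1$-equivariant topological question and resolve it by a relative Euler-class computation on a compactification of the hyperplane space. Parametrize pointed affine complex hyperplanes by $X := S^{2d-1}\times\mathbb{C}$, $(v,c)\mapsto H_{v,c}=\{z:\langle z,v\rangle=c\}$; the circle acts freely via $\zeta\cdot(v,c)=(v\zeta,\bar\zeta c)$. Taking $v$ itself as the reference ray in $\mathbb{C} v$ to orient the sectors turns the Fourier coefficients $(c_{j,m_j}(v,c))_{j=1}^d$ into a continuous $S^1$-equivariant map
\[
\phi : X \longrightarrow V := \bigoplus_{j=1}^d \mathbb{C}_{m_j},
\]
with $\mathbb{C}_{m_j}$ the weight-$m_j$ representation. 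Since $c_{j,-m_j}=\overline{c_{j,m_j}}$, a zero of the induced section of $E := X\times_{S^1} V$ over $Y := X/S^1$ is exactly the desired hyperplane. If $m_j$ is a multiple of $q$, the tiling identity $\sum_{k=0}^{q-1}\mu_j(\mathcal{S}_{q,\lambda e^{2\pi ik/q}})\equiv\mu_j(\mathbb{C}^d)$ forces $c_{j,m_j}\equiv 0$ automatically, so we may assume each $m_j\not\equiv 0\pmod q$.

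Next, radially compactify the $\mathbb{C}$-factor to $D^2$, giving the compact manifold with boundary $\tilde X=S^{2d-1}\times D^2$; its quotient $\bar Y := \tilde X/S^1$ is the closed disk bundle of the tautological line bundle $\mathcal{O}(-1) \to \mathbb{CP}^{d-1}$, with $\partial\bar Y \cong S^{2d-1}$. For compactly supported $\mu_j$ and $|c| \to \infty$, the sector measure $\mu_j(\mathcal{S}_{q,\lambda})$ approaches the step function $\mu_j(\mathbb{C}^d)\,\chi_{[\arg(-c)-2\pi/q,\,\arg(-c)]}(\arg\lambda)$ up to a transition layer of width $O(1/|c|)$, so integrating yields the continuous extension
\[
\phi_\infty(v,w) = \bigl(K_j\, w^{-m_j}\bigr)_{j=1}^d, \qquad (v,w) \in \partial\tilde X,
\]
where the constants $K_j \propto 1-e^{-2\pi im_j/q}$ are nonzero precisely under the assumption that each $m_j$ is coprime to $q$. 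In particular $\phi$ is nowhere-vanishing on $\partial\tilde X$.

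The decisive step is to show that the corresponding relative Euler class in $H^{2d}(\bar Y,\partial\bar Y)\cong\mathbb{Z}$ is nonzero. Change variables to $c' = \bar c$, under which the $S^1$-action becomes the standard diagonal one $(v,c') \mapsto (v\zeta, \zeta c')$ (both of weight $+1$), and introduce the genuinely holomorphic equivariant model
\[
\psi^\epsilon(v,c') = \bigl(K_j (c')^{m_j} + \epsilon\, v_j^{m_j}\bigr)_{j=1}^d, \qquad 0 < \epsilon \ll 1.
\]
On $\partial\tilde X$ this differs from $\phi_\infty$ by a positive scalar in each component, so $\psi^\epsilon|_{\partial}$ is equivariantly homotopic to $\phi|_\partial$ through nowhere-vanishing maps and both therefore determine the same relative Euler class. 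The defining equations $(c'/v_j)^{m_j} = -\epsilon/K_j$ together with $|v|=1$ yield, for each of the $\prod_j m_j$ choices of $m_j$-th roots of unity $(\omega_j)$, a single $S^1$-orbit of zeros upstairs and hence a single transverse zero downstairs in $\bar Y$; because $\psi^\epsilon$ is holomorphic in the new coordinates, each such zero contributes $+1$. The relative Euler number is therefore exactly $\prod_j m_j \neq 0$, and Poincar\'e-Hopf forces $\phi$ to vanish at some interior point of $\bar Y$.

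The main technical obstacle is establishing the asymptotic formula for $\phi_\infty$ uniformly in $v \in S^{2d-1}$ --- i.e., showing that the Fourier coefficients of the measure-of-sector functions genuinely converge to those of the pure step function, with rate good enough for continuous extension of $\phi$ to $\partial\tilde X$. This rests on compact support of the $\mu_j$ and on a dominated-convergence-type control of the $O(1/|c|)$ transition layer where sector rays sweep across the support. Apart from this analytic input, the argument is the standard relative-Euler-class / B\'ezout mechanism transposed from finite to continuous equivariance, which is the novelty emphasized in the abstract.
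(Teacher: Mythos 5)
Your proposal follows a genuinely different route from the paper's, although the two arguments ultimately extract the same topological obstruction. The paper compactifies the configuration space the ``other way'': sector collections are parametrized by the whole sphere $S(\mathbb{C}^{d+1})=S^{2d+1}$ with its standard free $S^1$-action, so the degenerate configurations form a single circle $\mathbf{0}\times S^1$ (one point of $\mathbb{C}P^d$ downstairs) rather than a boundary sphere; the test map $\mathcal{F}(x)=(c_{j,m_j}(x))_{j=1}^d$ is extended continuously over that circle, where it is computed directly to be $\int_{\arg\lambda\in I_b}\mu_j(\mathbb{C}^d)\lambda^{-m_j}\,d\lambda\neq 0$, and a zero elsewhere follows from an elementary degree argument: a nonvanishing equivariant map $S^{2d+1}\rightarrow\mathbb{C}^d$ would restrict to a nullhomotopic, hence degree-zero, equivariant self-map of $S^{2d-1}$, contradicting the fact that such maps have degree $\equiv m_1\cdots m_d \pmod q$ for every $q$, hence degree exactly $m_1\cdots m_d\neq 0$. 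Your disk-bundle compactification (with $\bar Y$ equal to $\mathbb{C}P^d$ minus a ball about that degenerate point), boundary nonvanishing, and B\'ezout count of the zeros of the holomorphic model $(K_jb^{m_j}+\epsilon a_j^{m_j})_j$ compute exactly the relative version of $e(\oplus_j\mathcal{O}(m_j))=m_1\cdots m_d\,a^d$ on $\mathbb{C}P^d$ -- the cohomological reading the paper itself spells out in Section 3.1 -- so you are paying with heavier machinery for the same integer. Your explicit reduction to $q\nmid m_j$ is sound and in fact slightly more careful than the paper at the degenerate locus; note only that the condition for $K_j\neq 0$ is $q\nmid m_j$, not $\gcd(m_j,q)=1$, which is fortunately what your reduction supplies.

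The one genuine gap is the step you flag but do not carry out: the continuous extension of $\phi$ to $\partial\tilde X$ with limit $\phi_\infty$. This is not optional in your scheme -- without it there is no relative Euler class -- and it is precisely the analogue of the step the paper does prove (continuity of $\mathcal{F}$ at $\mathbf{0}\times S^1$): one observes that $\mu_j(\partial\mathcal{S}_{q,\lambda}(x))=0$ for all but at most two values of $\lambda$ and applies dominated convergence twice. The same argument closes your gap, and compact support is not needed (the theorem is stated for arbitrary masses): if $(v_n,c_n)\rightarrow(v,w)$ with $|c_n|\rightarrow\infty$, then for every $\lambda$ off two critical arguments the sector $\mathcal{S}_{q,\lambda}(v_n,c_n)$ converges to $\mathbb{C}^d$ or to $\emptyset$, so $\mu_j(\mathcal{S}_{q,\lambda}(v_n,c_n))\rightarrow\mu_j(\mathbb{C}^d)\chi_{\mathrm{arc}}(\lambda)$ a.e.\ and the Fourier coefficients converge; sequential continuity at boundary points is all that compactness requires, so no uniformity in $v$ need be established. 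Two smaller repairs: $\psi^\epsilon|_\partial$ differs from $\phi_\infty$ by the additive perturbation $\epsilon v_j^{m_j}$, not by a positive scalar (the straight-line homotopy is still nonvanishing once $\epsilon<\min_j|K_j|$); and in the zero count you should note that $c'=0$ is impossible at a zero (it would force $v=0$), so $v_j\neq 0$ and your root-of-unity enumeration is legitimate, with transversality and local index $+1$ visible from the Jacobian in the $a$-variables being diagonal with entries $\epsilon m_ja_j^{m_j-1}\neq 0$.
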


	For the finite subgroups $\mathbb{Z}_q$ of the circle, annihilating coefficients showed that any $d$ masses on $\mathbb{C}^{d(q-r)}$ could be simultaneously equipartitioned by each of the $r$ regular $p$-fans composing some complex regular $q=pr$-fan $F_q$, provided $p$ was prime (see [24]). Passing to the full Lie group, for arbitrary $q\geq 2$  we have a complex hyperplane whose surrounding regular $q$-fans are all close to equipartitioning a given set of masses, provided the $L^2$-norm for functions on the circle is used. For instance, annihilating each first Fourier coefficient will show that 
	
\begin{corollary}  For any $d$ masses $\mu_1,\ldots, \mu_d$ on $\mathbb{C}^d$ and any $q\geq 2$, there exists a collection of regular $q$-sectors centered about a complex hyperplane such that \begin{equation} \|\mu_j(\mathcal{S}_{q, \lambda}) - \mu_j(\mathbb{C}^d)/q\|_2 \leq \sqrt{\frac{1}{3} -  \frac{2}{\pi^2} - \frac{1}{3q^2}}\, \mu_j(\mathbb{C}^d) \end{equation}  for each $1\leq j \leq d$. 
\end{corollary}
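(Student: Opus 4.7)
The plan is to invoke Theorem 1.2 with $m_1=\cdots=m_d=1$, producing a complex hyperplane $H_\mathbb{C}$ whose surrounding $q$-sectors satisfy $c_{j,1}=c_{j,-1}=0$ for every $1\leq j\leq d$. Since each $z\in\mathbb{C}^d\setminus H_\mathbb{C}$ lies in $\mathcal{S}_{q,\lambda}$ for a $\lambda$-arc of normalized length $1/q$, Fubini applied to (1.4) at $m=0$ yields $c_{j,0}=\mu_j(\mathbb{C}^d)/q$, and Parseval's identity then reduces (1.5) to the claim
\begin{equation*}
\sum_{|m|\geq 2}|c_{j,m}|^2 \;\leq\; \mu_j(\mathbb{C}^d)^2\!\left(\tfrac{1}{3}-\tfrac{2}{\pi^2}-\tfrac{1}{3q^2}\right).
\end{equation*}

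Next I would exhibit a factorization of $c_{j,m}$ into a ``sectorial'' factor—independent of $\mu_j$—times an ``angular'' factor coming from the mass. Fixing the parametrization of the fan, assign to $z\in\mathbb{C}^d\setminus H_\mathbb{C}$ its angular coordinate $\theta_z\in[0,2\pi)$; then the $m$-th Fourier coefficient of the indicator of the arc $\{\lambda:z\in\mathcal{S}_{q,\lambda}\}$ is the one-line computation
\begin{equation*}
\int_{S^1} 1_{\mathcal{S}_{q,\lambda}}(z)\,\lambda^{-m}\,d\lambda=e^{-im\theta_z}\,g(m),\qquad g(m)=\frac{e^{2\pi im/q}-1}{2\pi im},\quad |g(m)|^2=\frac{\sin^2(\pi m/q)}{\pi^2 m^2}.
\end{equation*}
Swapping the integrations in (1.4) via Fubini then gives $c_{j,m}=g(m)\int_{\mathbb{C}^d}e^{-im\theta_z}\,d\mu_j(z)$, and the crude bound $\mu_j(\mathbb{C}^d)$ on the inner integral yields $|c_{j,m}|^2\leq|g(m)|^2\,\mu_j(\mathbb{C}^d)^2$. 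Crucially, $g(m)=0$ whenever $q\mid m$ and $m\neq 0$, consistent with the partition identity $\sum_{k=0}^{q-1}\mu_j(\mathcal{S}_{q,\lambda\omega^k})=\mu_j(\mathbb{C}^d)$ for $\omega=e^{2\pi i/q}$.

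To finish, I would drop the $\sin^2$ numerator on the surviving indices and subtract off the vanishing contribution from multiples of $q$:
\begin{equation*}
\sum_{|m|\geq 2}|c_{j,m}|^2\leq \mu_j(\mathbb{C}^d)^2\!\left[\sum_{|m|\geq 2}\frac{1}{\pi^2 m^2}-\sum_{k\neq 0}\frac{1}{\pi^2(kq)^2}\right],
\end{equation*}
which evaluates via $\sum_{m\geq 1}1/m^2=\pi^2/6$ to $[(1/3-2/\pi^2)-1/(3q^2)]\,\mu_j(\mathbb{C}^d)^2$, matching the displayed target exactly. The argument has no substantive obstacle beyond Theorem 1.2 itself; the only point needing mild care is the Fubini-based factorization, which relies on the angular coordinate $\theta_z$ being defined $\mu_j$-almost everywhere—automatic since $H_\mathbb{C}$ is a null set for the absolutely continuous mass $\mu_j$.
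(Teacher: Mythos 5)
Your proposal is correct, and it reaches the stated bound by a genuinely different route from the paper at the one step that carries all the analytic content. Both arguments begin identically: invoke Theorem 1.1 with $m_1=\cdots=m_d=1$, record that $c_{j,0}=\mu_j(\mathbb{C}^d)/q$ and that $c_{j,m}=0$ for $m\in q\mathbb{Z}\setminus\{0\}$, and apply Parseval to reduce everything to bounding $\sum_{|m|\geq 2,\ q\nmid m}|c_{j,m}|^2$. The paper then estimates $|c_{j,m}|\leq V_j/|m|$ by integration by parts, where $V_j$ is the total variation of $\theta\mapsto\mu_j(\mathcal{S}_{q,\theta})$, and bounds $V_j\leq\mu_j(\mathbb{C}^d)/\pi$ by differentiating the polar-coordinate expression $f_{\mu_j}(\theta)=\int_\theta^{\theta+2\pi/q}\int_0^\infty h_j(r,\phi)\,r\,dr\,d\phi$. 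You instead apply Fubini directly to $(1.4)$, obtaining the exact factorization $c_{j,m}=g(m)\int e^{-im\theta_z}\,d\mu_j(z)$ with $|g(m)|=|\sin(\pi m/q)|/(\pi|m|)$ --- in effect, recognizing $f_{\mu_j}$ as the convolution of the angular marginal of $\mu_j$ with the indicator of an arc of length $2\pi/q$. Both routes land on the same coefficient bound $|c_{j,m}|\leq\mu_j(\mathbb{C}^d)/(\pi|m|)$ and the same closed-form sum $\frac13-\frac{2}{\pi^2}-\frac{1}{3q^2}$ (the subtraction of the multiples of $q$ is legitimate since $q\geq 2$ forces $|kq|\geq 2$ for $k\neq 0$). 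What your version buys: it avoids differentiating $f_{\mu_j}$ altogether, it makes the identities $c_{j,0}=\mu_j(\mathbb{C}^d)/q$ and $c_{j,m}=0$ for $q\mid m$ transparent rather than ``seen from substitution,'' and it retains the factor $\sin^2(\pi m/q)$, which you discard but which would yield a strictly sharper constant if kept. What the paper's version buys is that the total-variation bound generalizes immediately to the smoothness-based $L^\infty$ estimates of Remark 1. The only blemishes are cosmetic: the theorem you invoke is Theorem 1.1, not 1.2, and your $g(m)$ carries a phase tied to one particular parametrization of the arc, which is harmless since only $|g(m)|$ enters.
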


One comes arbitrarily close to equipartitioning in sufficiently high dimensions by annihilating further coefficients:  

\begin{corollary} For any $d$ masses $\mu_1,\ldots, \mu_d$ on $\mathbb{C}^{dn}$ and any integer $q\geq 2$, there exists a collection of regular $q$-sectors centered about a complex hyperplane such that \begin{equation}  \|\mu_j(\mathcal{S}_{q,\lambda}) - \mu_j(\mathbb{C}^{dn})/q\|_2< \frac{\kappa}{\sqrt n} \, \mu_j(\mathbb{C}^{dn}) \end{equation} for each $1\leq j \leq d$, where $\kappa$ is a constant independent of $d, n,$ and $q$. 
\end{corollary}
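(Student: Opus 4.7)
The plan is to deduce the corollary from Theorem 1.1 by annihilating a large block of Fourier coefficients for each mass and then bounding the remaining tail by Parseval. First, I would view the $d$ given masses $\mu_1,\ldots,\mu_d$ on $\mathbb{C}^{dn}$ as a list of $dn$ masses obtained by taking $n$ formal copies of each $\mu_j$, and apply Theorem 1.1 in the ambient dimension $dn$, assigning to the $n$ copies of $\mu_j$ the exponents $1,2,\ldots,n$. This produces a single complex hyperplane whose surrounding regular $q$-sectors $\{\mathcal{S}_{q,\lambda}\}_{\lambda\in S^1}$ satisfy $c_{j,k}=c_{j,-k}=0$ for every $1\le j\le d$ and every $1\le k\le n$.

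Next I would compute the Fourier coefficients $c_{j,m}$ directly. By Fubini, and the fact that each point $x\in\mathbb{C}^{dn}$ lies in $\mathcal{S}_{q,\lambda}$ for exactly an arc of $\lambda$-values of length $2\pi/q$ (determined by the phase $\phi(x)$ of $x$ in the one-dimensional complex quotient by the chosen hyperplane), a short calculation for $m\neq 0$ gives
\begin{equation*}
c_{j,m}\;=\;\frac{e^{2\pi i m/q}-1}{2\pi i m}\int e^{-im\phi(x)}\,d\mu_j(x),
\end{equation*}
and hence the universal bound
\begin{equation*}
|c_{j,m}|\;\le\;\frac{|\sin(\pi m/q)|}{\pi|m|}\,\mu_j(\mathbb{C}^{dn})\;\le\;\frac{\mu_j(\mathbb{C}^{dn})}{\pi|m|}.
\end{equation*}
For $m=0$ the same averaging shows $c_{j,0}=\mu_j(\mathbb{C}^{dn})/q$.

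With these ingredients, Parseval's identity on $S^1$ applied to $\mu_j(\mathcal{S}_{q,\lambda})-\mu_j(\mathbb{C}^{dn})/q$ retains only the tail $|m|>n$, since the first step annihilated all nonzero coefficients with $|m|\le n$:
\begin{equation*}
\bigl\|\mu_j(\mathcal{S}_{q,\lambda})-\mu_j(\mathbb{C}^{dn})/q\bigr\|_2^2\;=\;\sum_{|m|>n}|c_{j,m}|^2\;\le\;\frac{2\,\mu_j(\mathbb{C}^{dn})^2}{\pi^2}\sum_{m>n}\frac{1}{m^2}\;<\;\frac{2\,\mu_j(\mathbb{C}^{dn})^2}{\pi^2\,n}.
\end{equation*}
Taking square roots yields the desired estimate with $\kappa=\sqrt{2}/\pi$, manifestly independent of $d$, $n$, and $q$.

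The only real obstacle is verifying that Theorem 1.1 applies to the enlarged list when some of the $dn$ entries coincide as measures but are assigned distinct exponents. Since Theorem 1.1 is phrased for an arbitrary indexed list of masses together with an arbitrary list of positive exponents, and the equivariant topological input depends only on the functions $\lambda\mapsto\mu_j(\mathcal{S}_{q,\lambda})$ labeled by that list, duplication should be harmless; the constraint is simply that the parameter count is tight, which is precisely what forces the ambient dimension $\mathbb{C}^{dn}$ in the hypothesis.
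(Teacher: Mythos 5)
Your proposal is correct and follows essentially the same route as the paper: apply Theorem 1.1 in $\mathbb{C}^{dn}$ to the $dn$-fold duplicated list of masses with exponents $1,\ldots,n$ so that all nonzero Fourier coefficients with $|m|\le n$ vanish, then bound the Parseval tail via $|c_{j,m}|\le \mu_j(\mathbb{C}^{dn})/(\pi|m|)$ and $\sum_{m>n}m^{-2}<1/n$, arriving at the same constant $\kappa=\sqrt{2}/\pi$. The only real difference is in how the coefficient decay is obtained: the paper uses a total-variation estimate $|c_{j,m}|\le V_j/m$ with $V_j\le\mu_j(\mathbb{C}^{dn})/\pi$ computed in polar coordinates, whereas you compute $c_{j,m}$ exactly by Fubini on the arc-indicator, which is slightly more direct and also recovers for free the paper's observations that $c_{j,0}=\mu_j(\mathbb{C}^{dn})/q$ and that $c_{j,m}=0$ for $m\in q\mathbb{Z}_+$.
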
 

Corollary 1.2 is of particular interest when $d=1$ and $q=2$. In this case a complex hyperplane is a point in $\mathbb{R}^2$ and a regular 2-fan is a line, so (1.5) is a $L^2$-analogue in the plane of the centerpoint theorem above, i.e., the existence of a point $c$ whose surrounding regular 2-sectors satisfy the uniform bound $\|\mu(\mathcal{S}_{2,\lambda}) - \mu(\mathbb{R}^2)\|_\infty=\max_{\lambda\in S^1} |\mu(\mathcal{S}_{2,\lambda}) - \mu(\mathbb{R}^2)| \leq \mu(\mathbb{R}^2)/6$. This $1/6$ is smaller than that of Corollary 1.2, but Proposition 2.1 below will show that our $L^2$ bounds are smaller than those in the $L^\infty$ norm if $q\neq 2, 4, 5, 6$. We also observe that even though the center-transversality obtainable via Theorem 1.1. is in general most naturally stated in terms of the $L^2$-norm, some $L^\infty$ estimates can also be obtained via transform annihilation if further assumptions on the measures are made, e.g., the ``rotational acceleration" criteria of Remark 1 below.  

\subsection{Equivariant Topological Underpinnings} Measure equipartitions are ordinarily obtained topologically, with the configuration-space/test-map paradigm, applied throughout discrete geometry and combinatorics, the main tool for the problem's reduction to an equivariant framework (see, e.g., [19, 29, 30] for an introduction). Although suggested by Sarkaria [23] for their potential use in the topological Tverberg conjecture, our proof of Theorem 1.1 given in Section 3 represents the first use of continuous group actions in this scheme. As in the finite cases, owing to the group symmetry on each decomposition, the desired partition is shown to be equivalent to the zero of a certain equivariant map, guaranteed here by a simple degree argument. A variety of powerful cohomological techniques are employed in this context more generally (e.g., the ideal-valued index theory of [10], characteristic and more general obstruction classes, et cetera) and it should be noted that the specific use of the circle group offers a technical advantage. Namely, while torsion in finite group cohomology placed restrictions on the partition types  (i.e., transform annihilation) obtainable by these methods, including full equipartitions only when the number of regions is a prime power (see, e.g., [4, 5, 15, 16]), there are only trivial conditions on transform annihilation here because the group cohomology is now torsion free. In particular, our center-transversality holds for partitions by an arbitrary number of sectors.

\section{Transversality in the $L^\infty$-norm and Proof of Corollaries}

Before proving the above corollaries, we provide a comparison of the $d=1$ case of Corollary 1.2 to the corresponding uniform bounds. 

\begin{definition} Let $\varepsilon_\infty(q)$ denote the minimum $\varepsilon$ such that for any mass $\mu$ on $\mathbb{R}^2$, there exists some $\{\mathcal{S}_{q,\lambda}\}_{\lambda\in S^1}$ for which $\|\mu(\mathcal{S}_{q,\lambda})-\mu(\mathbb{R}^2)/q\ \|_\infty  \leq \varepsilon \mu(\mathbb{R}^2)$, and let $\varepsilon_2(q)$ be the analogous minimum  in the $L^2$-norm. \end{definition} 

The original centerpoint theorem is $\varepsilon_\infty(2)=1/6$, while for $q\geq 3$ a continuous extension of the ``wedge centerpoint"  theorems of [9] given for point collections shows that 

 \begin{proposition} (a) $\varepsilon_\infty(q)=\max \{1/q, (q-2)/2q\}$ for $q\geq 3$ and\\ (b) $\varepsilon_2(q)\leq  \sqrt{\frac{1}{3} -  \frac{2}{\pi^2} - \frac{1}{3q^2}}<\varepsilon_\infty(q)$ for $q=3$ and all $q>6$. \end{proposition}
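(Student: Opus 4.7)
The plan is to treat parts (a) and (b) separately, with part (b) reducing to Corollary 1.2 plus an arithmetic check and part (a) relying on the wedge centerpoint machinery of [9].

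For \emph{part (a), upper bound}, I would deduce $\varepsilon_\infty(q)\leq \max\{1/q,(q-2)/2q\}$ by continuously extending the wedge centerpoint results of [9] from point collections to absolutely continuous masses (a routine approximation). These produce a center $c\in \mathbb{R}^2$ and regular $q$-fan orientation at which every sector carries mass in $[\max\{0,(4-q)/2q\},\min\{2/q,1/2\}]\cdot \mu(\mathbb{R}^2)$, directly yielding the $L^\infty$ bound. For $q=3$, the per-sector upper bound $2/q$ is already supplied by the classical 2D centerpoint theorem (each $2\pi/3$-sector lies inside a half-plane through $c$, hence has mass $\leq 2/3$); for $q\geq 4$, the central claim is that each sector carries at most $\tfrac{1}{2}\mu(\mathbb{R}^2)$, obtained in [9] via a ham-sandwich-style rotation argument (for $q=4$, the two perpendicular lines $L_\theta, L_{\theta+\pi/2}$ that bisect $\mu$ meet at a common point and decompose the plane into four quadrants whose masses are necessarily of the form $(a,b,a,b)$ with $a+b=1/2$; analogous continuity arguments handle larger $q$).

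For the matching lower bound, I would use an extremal mass given by a smooth absolutely continuous approximation of $\tfrac{1}{2}(\delta_{p_1}+\delta_{p_2})$ with $p_1,p_2$ widely separated. For any center $c$ and regular $q$-fan, the two clumps either occupy a single sector --- yielding deviation $\approx (q-1)/q$ --- or occupy distinct sectors, in which case exactly two sectors carry $\tfrac{1}{2}\mu(\mathbb{R}^2)$ and $q-2$ carry zero, producing maximum deviation $\max\{|1/2-1/q|,1/q\}=\max\{(q-2)/2q,1/q\}$. Minimizing over the choice of fan forces $\varepsilon_\infty(q)\geq \max\{1/q,(q-2)/2q\}$, matching the upper bound.

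For \emph{part (b)}, the upper bound $\varepsilon_2(q)\leq \sqrt{1/3-2/\pi^2-1/(3q^2)}$ is the $d=1$ specialization of Corollary 1.2 applied to a single mass on $\mathbb{C}\simeq \mathbb{R}^2$. The strict inequality $\sqrt{1/3-2/\pi^2-1/(3q^2)}<\varepsilon_\infty(q)$ I would verify by squaring: at $q=3$, where $\varepsilon_\infty(3)=1/3$, it reduces to $2/\pi^2>5/27$ ($\approx 0.2026>0.1852$); for $q\geq 7$, where $\varepsilon_\infty(q)=(q-2)/2q$, it reduces to $1/12+1/q-4/(3q^2)<2/\pi^2$, and since the left-hand side is strictly decreasing in $q$ past $q=8/3$, it suffices to check $q=7$, where the left-hand side is approximately $0.199<0.2026$. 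The principal obstacle is the upper bound in part (a): producing, for $q\geq 4$, a regular $q$-fan in which every sector carries at most half of $\mu$ requires the careful rotation/ham-sandwich argument of [9] rather than the basic centerpoint theorem. Part (b) is then essentially mechanical given Corollary 1.2 and the above arithmetic comparisons.
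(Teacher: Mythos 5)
Your part (b) is fine and matches the paper: the bound on $\varepsilon_2(q)$ is the $d=1$ case of Corollary 1.2, and your arithmetic comparison with $\varepsilon_\infty(q)$ (checking $q=3$ directly, then monotonicity of $1/12+1/q-4/(3q^2)$ for $q\geq 7$) is correct and in fact more explicit than the paper, which merely asserts the inequality. The gap is in part (a), and it is the same gap in both directions: you treat the $L^\infty$ norm as if it referred to the $q$ sectors of a single fan, whereas $\|\mu(\mathcal{S}_{q,\lambda})-\mu(\mathbb{R}^2)/q\|_\infty=\max_{\lambda\in S^1}|\cdot|$ runs over \emph{all} orientations of the regular $q$-fan at the chosen center. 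For the upper bound with $q\geq 4$ you produce two perpendicular bisecting lines and compute that their four quadrants have masses $(a,b,a,b)$ with $a+b=1/2$; this controls a single value of $\lambda$ and does not "directly yield the $L^\infty$ bound," and "analogous continuity arguments for larger $q$" cannot mean many concurrent bisecting lines at prescribed angles, since only a one-parameter family of rotations is available. What is needed, and what the paper supplies, is one center at which \emph{every} sector of angle $2\pi/q$, in every orientation, has mass in $[0,1/2]\cdot\mu(\mathbb{R}^2)$: the paper finds, via an antipodality/intermediate-value argument, a regular $6$-fan whose three lines are concurrent and all bisect $\mu$, and then notes that any sector of angle $\leq 2\pi/3$ at that center lies in a closed half-plane bounded by one of the three lines. (Your construction can be repaired for $q\geq 4$ by the analogous observation that any sector of angle $\leq\pi/2$ at the intersection of two perpendicular bisecting lines lies in a half-plane bounded by one of them, but you never take this step, and it does not cover $q=3$.)

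The lower bound has the mirror-image defect: for a fixed fan, the dichotomy "the two clumps occupy one sector, or two distinct sectors with masses $(1/2,1/2,0,\dots,0)$" is false, because a fan boundary through a clump splits it; for $q=4$, centering at the midpoint and putting one fan line through both clumps yields the exact equipartition $(1/4,1/4,1/4,1/4)$, so no single orientation witnesses deviation $1/4$. The construction is rescued precisely by the supremum over $\lambda$: since the clumps are $2r$ apart, at least one lies at distance $\geq r$ from any center $c$ and subtends angle $<2\pi/q$ there, so \emph{some} orientation places it inside a single sector of mass $\geq\mu(\mathbb{R}^2)/2$, giving the $(q-2)/2q$ branch, while a separate choice of orientation produces a sector of negligible mass for the $1/q$ branch (the paper uses $n$ points per clump to make this half clean, obtaining $1/q-1/2nq$ and letting $n\to\infty$). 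As written, neither half of your part (a) establishes the stated value of $\varepsilon_\infty(q)$, though both halves are repairable along the lines above.
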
 
 
 \begin{proof}  To show that $\epsilon_\infty(q)\leq \max \{1/q, (q-2)/2q\}$, it suffices as usual (see e.g., [26, 27]) to assume that the density function of $\mu=h\,dm$ is $C^\infty$ with connected compact support. As in [9], the key ingredient is demonstrating a regular $6$-fan whose three lines all bisect $\mu$, and this follows from the intermediate value theorem: as in the original proof [25] of the ham sandwich theorem, each $x\in S^1$ determines a unique $t(x)\in\mathbb{R}$ such that the line $L(x):=\{u\mid \langle u, x \rangle =t(x)\}$  bisects $\mu$. Letting $L_k(x):=L(\exp(\pi ik/3)x)$ for each $0\leq k\leq 2$ and denoting their intersections by $p_k(x)=L_k(x)\cap L_{k+1}(x)$, the three lines form a regular 6-fan provided $p_1(x)\in L_0(x)$. If $f(x)= \langle p_1(x), x\rangle -t(x)x$, it follows that $f(-x)=-f(x)$ for all $x\in S^1$, and therefore some $\cup_{k=0}^2L_k(x)$ must be a regular 6-fan. Letting $c$ be the center of such a $F_6$, one easily sees that $0\leq \mu(\mathcal{S}_{q,\lambda}) \leq \mu(\mathbb{R}^2)/2$ for each of the regular $q$-sectors centered at $c$, and hence that $\|\mu(\mathcal{S}_{q,\lambda})-\mu(\mathbb{R}^2)/q\|_\infty \leq \max \{1/q, (q-2)/2q\}\mu(\mathbb{R}^2)$. 
 
 	That $\epsilon_\infty(q) \geq\max\{1/q,(q-2)/2q\}$ follows as in Lemmas 1 and 5 of [9]: For large $r>0$, consider $2n$ points $P$ on the real line which are separated into two collections $P_1\subset [-r-1,-r]$ and $P_2\subset [r,r+1]$ of $n$ points each. For small $\delta>0$, let $\mu_{r,\delta}$ consist of the $2n$ disjoint disks of radius $\delta$ whose centers are in $P$. Assuming that $r$ is sufficiently large and $\delta$ is sufficiently small,  the distance between $c$ and $P_1$ or $c$ and $P_2$  is at least $r$ for any point $c\in\mathbb{R}^2$, so any $\{\mathcal{S}_{q,\lambda}\}_{\lambda \in S^1}$ has some regular $q$-sector containing all the disks with centers in $P_1$ or all the disks with centers in $P_2$. On the other hand, one of these sectors has measure at most $\pi\delta^2/q$. Thus $\|\mu_{r,\delta}(\mathcal{S}_{q,\lambda}) - \mu_{r,\delta}(\mathbb{R}^2)\|_\infty \geq \max\{1/q -1/2nq, (q-2)/2q\}\mu_{r,\delta}(\mathbb{R}^2)$ for any point in $\mathbb{R}^2$, so $\varepsilon_\infty(q)=\max\{1/q, (q-2)/2q\}$, and therefore $\varepsilon_2(q)\leq \sqrt{\frac{1}{3} -  \frac{2}{\pi^2} - \frac{1}{3q^2}}<\varepsilon_\infty(q)$ for $q=3$ and $q>6$ by Corollary 1.2. 
 \end{proof} 
 
 \subsection{Proof of Corollaries 1.2--1.3} 
 
\begin{proof} Let $q\geq 2$, and suppose that $\mu_1=h_1\,dm,\ldots, \mu_d=h_d\,dm$, $h_j:\mathbb{C}^{dn}\rightarrow [0,\infty)$. For any $\mathcal{D}_q(S^1)=\{\mathcal{S}_{q,\lambda}\}_{\lambda\in S^1}$, it is easily seen from substitution that (i) $c_{j,m}=0$ for all $m\in q\mathbb{Z}_+$ and that (ii) $c_{j,0}=\mu_j(\mathbb{C}^{dn})/q$. By Theorem 1.1, there exists some complex hyperplane $H_{\mathbb{C}}$ with  $c_{j,\pm m}=0$ for each $1\leq j \leq d$ and each $1\leq m \leq n$, so
\begin{equation} \| \mu_j(\mathcal{S}_{q,\lambda}) - \mu_j(\mathbb{C}^{dn})/q\|_2^2 = 2\sum_{\stackrel{m>n}{m \notin q\mathbb{Z}_+}} |c_{j,m}|^2 \end{equation} by the Parseval identity. Letting $f_{\mu_j}(\lambda)=\mu_j(\mathcal{S}_{q,\lambda})$ and identifying $\lambda$ with $\theta\in [-\pi,\pi]$, one has $|c_{j,m}| \leq \frac{V_j}{m}$, where  $V_j=\frac{1}{2\pi} \int_{-\pi}^\pi |f_{\mu_j}'(\theta)|\,d\theta$ is the total variation, and the latter is trivially bounded by $\mu_j(\mathbb{C}^{dn})/\pi$: After changing to polar coordinates and a possible translation, we may assume $f_{\mu_j}(\theta)= \int_\theta^{\theta +2\pi/q} \int_0^\infty h_j(r,\phi)r\,dr \, d\phi$, so $f_{\mu_j}'(\theta)=\int_0^\infty [h_j(r,\theta+2\pi/q)-h_j(r,\theta)]r\,dr$, and therefore $V_j\leq \frac{1}{2\pi} \int_{-\pi}^\pi \int_0^\infty |h_j(r,\theta+2\pi/q)-h_j(r,\theta)|r\,dr\,d\theta  \leq \frac{1}{\pi} \int_{-\pi}^\pi \int_0^\infty h_j(r,\theta)r\,dr\,d\theta=\mu_j(\mathbb{C}^{dn})/\pi$. One then has $\|\mu_j(\mathcal{S}_{q,\lambda}) - \mu_j(\mathbb{C}^{dn})/q)\|_2^2 \leq  \frac{2\mu_j(\mathbb{C}^{dn})^2}{\pi^2}\sum_{m>n, m\notin q\mathbb{Z}_+}\frac{1}{m^2}<\frac{2\mu_j(\mathbb{C}^{dn})^2}{n\pi^2}$. This proves Corollary 1.3, while the explicit value $\sum_{m>1, m\notin q\mathbb{Z}_+} m^{-2}=(q^2-1)\pi^2/6q^2 -1$ gives (1.5).\end{proof}
			
\begin{remark} Using similar reasoning, it is easy to see that uniform bounds via transform annihilation also hold given further smoothness assumptions on the measures. For instance, if the density functions of the $\mu= h\,dm$ are $C^1$, estimates can be given control on the ``rotational acceleration"  $[\mu(\mathcal{S}_{q,\theta})]''$ of the measures about the various complex hyperplanes. In particular, for any such $\mu_1=h_1\,dm,\ldots, \mu_d=h_d \,dm$ on $\mathbb{C}^d$, there exists a complex hyperplane such that \begin{equation*} |\mu_j(\mathcal{S}_q) - \mu_j(\mathbb{C}^d)/q| \leq 2A_j  \left(\frac{(q^2-1)\pi^2}{6q^2} -1\right) \end{equation*}  for each sector $\mathcal{S}_q$ and each $1\leq j \leq d$, where $A_j=\sup_{H_{\mathbb{C}}} \frac{1}{2\pi} \int_{-\pi}^\pi \left| [\mu_j(\mathcal{S}_{q,\theta})]'' \right |\,d\theta$. \end{remark}

\section{Proof of Theorem 1.1}

	Finally, we prove Theorem 1.1 via the configuration-space/test-map paradigm mentioned in the introduction. For the \textit{configuration-space}, to each $x=(\mathbf{a},b)\in S(\mathbb{C}^{d+1})$, $\|\mathbf{a}\|^2 + |b|^2=1$, we define the sets 	\begin{equation} \mathcal{S}_{q,\lambda}(x)=\{\mathbf{u}\in\mathbb{C}^d\mid \ \langle \mathbf{u},\mathbf{a} \rangle_{\mathbb{C}} + \bar{b}=\lambda v,\, |\arg v|\leq \pi/q\} \end{equation} for each $\lambda\in S^1$, where $\langle\mathbf{u},\mathbf{a}\rangle=\sum_{i=1}^du_i\bar{a}_i$ denotes the standard Hermitian form on $\mathbb{C}^d$.  Note that each collection $\mathcal{D}_q(S^1)$ of $\mathbb{C}^d$ is uniquely realized by the $S^1$-orbit of some $x\notin \mathbf{0}\times S^1$, and moreover that the $S^1$-action on the orbit corresponds precisely to the rotations of the sectors about their centering complex hyperplane $H_{\mathbb{C}}(x)=\{\mathbf{u}\mid \langle \mathbf{u}, \mathbf{a} \rangle_{\mathbb{C}}+\bar{b}=0\}$. On the other hand, the sets $\mathcal{S}_\lambda(\mathbf{0},b)$ are all of $\mathbb{C}^d$ if $\arg \lambda \in I_b:=[-\pi/q - \arg b, \pi/q- \arg b]$ and are empty otherwise.

	 Now suppose that $\mu_1,\ldots, \mu_d$ are masses on $\mathbb{C}^d$. Given $m_1,\ldots, m_d>0$, consider the representation $\rho=\oplus_{j=1}^d \chi_{m_j}: S^1\rightarrow U(d)$, the direct sum of the power maps $\chi_{m_j}(\lambda)=\lambda^{m_j}$. Evaluating the Fourier coefficients ranging over all $\mathcal{D}_q(S^1)=\{\mathcal{S}_{q,\lambda}\}_{\lambda\in S^1}$ can then be extended to a continuous \textit{test-map} $\mathcal{F}: S(\mathbb{C}^{d+1}) \rightarrow \mathbb{C}^d$ defined on the full sphere by $\mathcal{F}(x) = (c_{1,m_1}(x),\ldots, c_{d,m_d}(x))$, where
		
		\begin{equation} c_{j,m_j}(x)= \int_{S^1} \mu_j(\mathcal{S}_{q,\lambda}(x))\lambda^{-m_j}\,d\lambda \end{equation} 

	Thus Theorem 1.1 will be proven if we can find some $x\notin \mathbf{0}\times S^1$ for which $\mathcal{F}(x)=0$. Crucially, since $\mathcal{S}_{q,\lambda_1}(\lambda_2x) = \mathcal{S}_{q,\lambda_1\lambda_2}(x)$ for all $\lambda_1,\lambda_2\in S^1$ for any of the sets (3.1), this map is equivariant with respect to the standard action on $S(\mathbb{C}^{d+1})$ and the linear action on $\mathbb{C}^d$ given by $\rho$. Continuity of $\mathcal{F}$ is demonstrated in an essentially standard fashion below, so a zero of this map can be guaranteed from the degree calculation of Proposition 3.1. As $c_{j,m_j}(\mathbf{0},b)= \int_{\arg \lambda \in I_b} \mu_j(\mathbb{C}^d) \lambda^{-m_j}\,d\lambda \neq 0$, the zero cannot lie in $\textbf{0}\times S^1$, as desired. 

	For continuity, it suffices to consider a single mass and a single $\chi_m: S^1\rightarrow U(1)$.  To show that $\int_{S^1}\mu(\mathcal{S}_{q,\lambda}(x_n))\lambda^{-m}\,d\lambda\to \int_{S^1}\mu(\mathcal{S}_{q,\lambda}(x))\lambda^{-m}d\lambda$ if $x_n=(\mathbf{a}_n,b_n) \to x=(\mathbf{a},b)$, one notes that the set $\partial \mathcal{S}_{q,\lambda}(x):= \{\mathbf{u}\in\mathbb{C}^d\mid \ \langle \mathbf{u},\mathbf{a} \rangle_{\mathbb{C}} + \bar{b}=r\lambda e^{\pm \pi i/q},\,r\geq 0\}$ is half of a real hyperplane when $x\notin \mathbf{0}\times S^1$, while for $x=(\mathbf{0},b)$ it is all of $\mathbb{C}^d$ if $\arg\lambda=\pm \pi/q - \arg b$ and is empty otherwise. Thus $\mu(\partial \mathcal{S}_{q,\lambda}(x))=0$ for all but at most two values of $\lambda$, so as in the proof of the ham sandwich theorem given in [19], it follows from the dominated convergence theorem that $\mu(\mathcal{S}_{q,\lambda}(x_n))\lambda^{-j}\rightarrow \mu(\mathcal{S}_{q,\lambda}(x))\lambda^{-j}$, almost everywhere. Thus $\mathcal{F}(x_n)\rightarrow \mathcal{F}(x)$ again by dominated convergence, since  $|\mu(\mathcal{S}_{q,\lambda}(x_n))\lambda^{-j}|\leq \mu(\mathbb{C}^d)<\infty$. 

\begin{proposition} If $S^1$ acts on $S(\mathbb{C}^{d+1})=S^{2d+1}$ by the standard action and linearly on $\mathbb{C}^d$ via $\rho=\oplus_{j=1}^d \chi_{m_j}:S^1\rightarrow U(d),$ then any continuous $S^1$-equivariant map $h: S^{2d+1} \rightarrow \mathbb{C}^d$ has a zero if $m_1,\ldots, m_d\neq 0$. 
\end{proposition}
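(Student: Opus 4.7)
The plan is to translate the statement into a characteristic-class obstruction over the quotient $\mathbb{CP}^d = S^{2d+1}/S^1$. Given any continuous $S^1$-equivariant map $h\colon S^{2d+1}\to \mathbb{C}^d$, I would form the associated complex rank-$d$ vector bundle
\begin{equation*}
E \;:=\; S^{2d+1}\times_{S^1}\mathbb{C}^d \;\longrightarrow\; \mathbb{CP}^d,
\end{equation*}
in which $S^1$ acts on the fiber $\mathbb{C}^d$ via $\rho=\bigoplus_{j=1}^d\chi_{m_j}$. By the standard dictionary, the equivariance of $h$ produces a continuous global section $\tilde h([x])=[x,h(x)]$ of $E$ whose zero locus corresponds exactly to that of $h$. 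The goal is therefore to prove that every continuous section of $E$ must vanish somewhere, which I would derive from the non-vanishing of the top Chern class $c_d(E)\in H^{2d}(\mathbb{CP}^d;\mathbb{Z})$.

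To compute this Chern class, I would exploit the splitting of $\rho$ into characters to decompose $E$ as a Whitney sum $E=L_1\oplus\cdots\oplus L_d$, where $L_j:=S^{2d+1}\times_{S^1}\mathbb{C}_{m_j}$. Each $L_j$ is the $m_j$-fold tensor power of the line bundle attached to the defining character $\chi_1$, which is (up to orientation convention) the tautological line bundle over $\mathbb{CP}^d$, so $c_1(L_j)=\pm m_j\cdot x$ for $x\in H^2(\mathbb{CP}^d;\mathbb{Z})$ a generator. By the Whitney product formula,
\begin{equation*}
c_d(E)\;=\;c_1(L_1)\cdot c_1(L_2)\cdots c_1(L_d)\;=\;(\pm 1)^d\, m_1 m_2\cdots m_d\cdot x^d,
\end{equation*}
which is a nonzero element of $H^{2d}(\mathbb{CP}^d;\mathbb{Z})\cong\mathbb{Z}\langle x^d\rangle$ precisely because no $m_j$ vanishes. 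Since the top Chern class of a complex vector bundle is exactly the obstruction to admitting a nowhere-vanishing section, $\tilde h$, and therefore $h$, must have a zero.

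The main obstacle is essentially bookkeeping: verifying the correspondence between equivariant maps and sections (whose continuity is automatic from the quotient-topology description of $E$) and correctly identifying each $L_j$ as a tensor power of the tautological bundle to read off $c_1(L_j)$. Once these are in hand, the conclusion follows from a short computation in the ring $H^*(\mathbb{CP}^d;\mathbb{Z})=\mathbb{Z}[x]/(x^{d+1})$. As suggested in the introduction, the decisive feature compared with finite-group analogues is precisely that this cohomology ring is torsion free, so that the only constraint imposed on the weights is the obviously necessary $m_j\neq 0$.
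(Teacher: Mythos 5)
Your proof is correct, but it is not the route the paper takes for Proposition 3.1. The paper's actual argument is an elementary degree computation: assuming $h$ has no zero, one normalizes to get an equivariant map $k(x)=h(x)/\|h(x)\|$ into $S(\mathbb{C}^d)=S^{2d-1}$, composes with the nullhomotopic inclusion $S^{2d-1}\hookrightarrow S^{2d+1}$ to obtain an $S^1$-equivariant self-map of $S^{2d-1}$ of degree zero, and then invokes tom Dieck's result that any such equivariant map has degree congruent to $m_1\cdots m_d$ modulo every $q\geq 2$, hence degree exactly $m_1\cdots m_d\neq 0$ --- a contradiction. What you propose instead is the Borel-construction/characteristic-class argument: pass to the bundle $E=S^{2d+1}\times_{S^1}\mathbb{C}^d\to\mathbb{C}P^d$, split it as a sum of line bundles $L_j$ with $c_1(L_j)=\pm m_j x$, and conclude $c_d(E)=\pm m_1\cdots m_d\,x^d\neq 0$, so no section (in particular the one induced by $h$) can be nowhere zero. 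Amusingly, this is precisely the ``typical means'' that the author spells out in Section 3.1 as a cohomological perspective on the same proposition, including the computation $c_d(E_\rho)=m_1\cdots m_d\,a^d$; the author simply chose the degree argument as the official proof, presumably because it avoids the bundle-theoretic machinery. Your argument is sound as written; the only point worth polishing is the phrase that the top Chern class is ``exactly'' the obstruction to a nowhere-vanishing section --- in general it is only the primary obstruction, but the implication you actually use (a nowhere-zero section of an oriented real rank-$2d$ bundle forces the Euler class, i.e.\ $c_d(E)$, to vanish) is standard and suffices here. Both approaches hinge on the same torsion-free computation in $H^*(\mathbb{C}P^d;\mathbb{Z})$ and yield the identical conclusion that the only constraint on the weights is $m_j\neq 0$.
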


\begin{proof} If $h$ were not to have a zero, the usual argument shows that composition of $k(x):=h(x)/\|h(x)\|$ with the nullhomotopic inclusion $S^{2d-1}\hookrightarrow S^{2d+1}$ produces a degree zero $S^1$-equivariant map $f: S^{2d-1}\rightarrow S^{2d-1}$. This is a contradiction, because $f$ has degree $m=m_1\cdots m_d$ \textit{mod} $q$ for any integer $q\geq 2$  (see, e.g., Proposition 4.12 of [8]), hence overall degree $m\neq 0$. \end{proof} 

\subsection{A Cohomology Perspective} As many of the results in combinatorial and discrete geometry obtained by topological reduction rely on equivariant cohomological methods, we conclude with a discussion of how the lack of restrictions on transform annihilation of Theorem 1.1, as opposed to that for finite groups in the CS/TM-scheme, can be seen at the level of group cohomology. Recall that for each group $G$, there is a classifying space $BG$, unique up to homotopy, which is the quotient of a contractible space $EG$ on which $G$ acts freely: $G\hookrightarrow EG\rightarrow BG=EG/G$ (see, e.g., [14]), and one can define the cohomology of the group $G$ as $H^*(BG;\mathbb{Z})$. For the circle group, one can take $ES^1=S^\infty=\cup_d S(\mathbb{C}^{d+1})$ with the standard $S^1$ action, so that $BS^1=S^\infty/S^1=\cup_d\mathbb{C}P^d=\mathbb{C}P^\infty$ is infinite dimensional complex projective space. It is a basic fact that $H^*(\mathbb{C}P^\infty;\mathbb{Z})$ is simply the polynomial ring $\mathbb{Z}[a]$. 

	A typical means of showing that the equivariant map $h: S^{2d+1}\rightarrow \mathbb{C}^d$ vanishes is to quotient the trivial bundle $S^{2d+1}\times\mathbb{C}^d$ by the diagonal $S^1$-action and demonstrate that the resulting complex vector bundle $\mathbb{C}^d \hookrightarrow E:= S^{2d+1} \times_{S^1} \mathbb{C}^d \rightarrow \mathbb{C}P^d$ does not admit a non-vanishing section, and therefore that the particular section $s: \mathbb{C}P^d\rightarrow E$ induced from $x\mapsto (x,h(x))$ must have a zero. To that end, one calculates the bundle's top Chern class $c_d(E)\in H^{2d}(\mathbb{C}P^d;\mathbb{Z})$ to be non-zero (see, e.g., [14, 20]).  From the viewpoint of classifying spaces, $E$ is the pullback under inclusion $i:\mathbb{C}P^d\hookrightarrow \mathbb{C}P^\infty$ of the bundle $E_\rho:=ES^1\times_{S^1} \mathbb{C}^d$ given by the Borel construction, so that $c_d(E)=i^*(c_d(E_\rho))$ by naturality. As $i^*:H^*(\mathbb{C}P^\infty;\mathbb{Z})\rightarrow H^*(\mathbb{C}P^d;\mathbb{Z})$ is an isomorphism in dimensions $n\leq 2d$ (see, e.g., [13]), one is ultimately reduced to the computation $c_d(E_\rho)=m_1\cdots m_d\, a^d\neq 0$ in the group cohomology $H^*(BS^1;\mathbb{Z})$. 
	
\section{Acknowledgments} 

The author is grateful for the suggestions and comments of the anonymous reviewer, which were helpful in improving the exposition of this paper. This research was partially supported by ERC advanced grant 32094 during visits with Gil Kalai at the Hebrew University of Jerusalem, whom the author thanks for many beneficial conversations. 

\bibliographystyle{plain}

\begin{thebibliography}{12}

\bibitem{} I. B\'ar\'any, J. Matou\v sek. Equipartition of two measures by a 4-fan, \textit{Discrete Comput. Geom.} Vol. 27 (2002) 293--301. 

\bibitem{} I. B\'ar\'any, J. Matou\v sek. Simultaneous partitions of measures by k-fans, \textit{Discrete Comput. Geom.} Vol. 25 (2001) 319--334.

\bibitem{ } P. Blagojevi\'c, F. Frick, A. Hasse, G. Ziegler. Topology of the Gr\"unbaum-Hadwiger-Ramos hyperplane mass partition problem, arXiv:1502.02975 [math.AT]

\bibitem{ } P. Blagojevi\'c, R. Karasev. Extensions of theorems of Rattray and Makeev, \textit{Topol. Methods in Nonlinear Anal.} Vol. 40 (2012) 189--213.

\bibitem{ } P. Blagojevi\'c, G. Ziegler. Convex Equipartitions via Equivariant Obstruction Theory, \textit{Israel J. Math}, Vol. 200 (2014) 49--77.

\bibitem{ } P. Blagojevi\'c, G. Ziegler. The ideal-valued index for a dihedral group action and mass partition by two hyperplanes, \textit{Topology Appl.} Vol. 158, No. 12, No. 12 (2011) 1326--1351.

\bibitem{ } T. Ceccherini-Silberstein, F. Scarabotti, F. Tolli. \textit{Harmonic Analysis on Finite Groups: Representation Theory, Gelfand Paris, and Markov Chains}, Cambridge Studies in Advanced Mathematics Vol. 108, Cambridge University Press (2008). 

\bibitem{ } T. tom Dieck. \textit{Transformation Groups}, Walter de Gruyter, 1987. 

\bibitem{ } J. Erickson, F. Hurtado, P. Mortin. Centerpoint theorems for wedges, \textit{Discrete Math. and Theor. Comp. Sci.}, Vol. 11, No. 1 (2009), 45--54. 

\bibitem{ } E. Fadell, S. Husseini. An ideal-valued cohomological index theory with applications to Borsuk-Ulam and Bourgin-Yang theorems, \textit{Ergod. Th. and Dynam. Sys.} Vol. 8 (1988) 73--85. 

\bibitem{ } B. Gr\"unbaum. Partitions of mass-distributions and convex bodies by hyperplanes, \textit{Pacific J. Math.} Vol. 10 (1960) 1257--1261.

\bibitem{ } H. Hadwiger. Simultane Vierteilung Zweier K\"oper, \textit{Arch. Math.} (Basel) Vol. 17 (1966) 274--278. 

\bibitem{ } A. Hatcher. \textit{Algebraic Topology}, Cambridge University Press (2002).

\bibitem{ } D. Husemoller. \textit{Fiber Bundles}, Springer-Verlag (1994). 

\bibitem{ } R. Karasev. Equipartition of a measure by $Z_p^k$-invariant fans, \textit{Discrete Comput. Geom.} Vol. 43 (2010) 477--481. 

\bibitem{ } R. Karasev, A. Hubard, B. Aronov. Convex equipartitions: the spicy chicken theorem, \textit{Geom. Dedicata} Vol. 170 (2014) 263--279. 

\bibitem{ } V.V. Makeev. Equipartition of a continuous mass distribution, \textit{J. Math. Sci.} Vol. 140, No. 10 (2007) 551--557.

\bibitem{ } P. Mani-Levitska, S. Vre\'cica, R. \v Zivaljevi\'c. Topology and combinatorics of partitions of masses by hyperplanes, \textit{Adv. Math.} Vol. 207 (2006) 266--296.

\bibitem{ } J. Matou\v sek. \textit{Using the Borsuk-Ulam Theorem. Lectures on Topological Methods in Combinatorics and Geometry}, Springer-Verlag (2008).

\bibitem{ } J.W. Milnor, J.D. Stasheff. \textit{Characteristic Classes}, Princeton University Press (1974).

\bibitem{ } R. Rado. A theorem on general measure, \textit{J. London Math. Soc.} Vol. 21 (1947) 291--300. 

\bibitem{ } R. Ramos. Equipartition of mass distributions by hyperplanes, \textit{Discrete Comput. Geom.} Vol. 15 (1996) 147--167.

\bibitem{ } K.S. Sarkaria. Tverberg partitions and Borsuk-Ulam theorems, \textit{Pacific J. Math.} Vol. 196, No. 1 (2000) 231--241. 


\bibitem{} S. Simon. Measure equipartitions via finite Fourier analysis, \textit{Geom. Dedicata} (2015) DOI:10.1007/s10711-015-0077-5. 

\bibitem{ } H. Steinhaus. A note on the ham sandwich theorem, \textit{Mathesis Polska} Vol. 9 (1938) 26--28. 

\bibitem{ } S.T. Vre\'cica, R.T. \v Zivaljevi\'c. Conical equipartitions of mass distributions, \textit{Discrete Comput. Geom.} Vol. 25 (2001) 335--350.

\bibitem{} F. Yao, D. Dobkin, H. Edelsbrunner, M. Paterson. Partitioning space for range queries, \textit{SIAM J. Comput.} Vol. 18, No. 2 (1989) 371--384.

\bibitem{ } R.T. \v Zivaljevi\'c. Computational topology of equipartitions by hyperplanes, \textit{Topol. Methods in Nonlinear Anal.} Vol. 45, No. 1 (2015) 63--90. 

\bibitem{ } R.T \v Zivaljevi\'c. Topological Methods. Chapter 14 in \textit{Handbook of Discrete and Computational Geometry}, J.E. Goodman, J. O'Rourke, eds, Chapman \& Hall/CRC (2004) 305--330.

\bibitem{ } R.T \v Zivaljevi\'c. User's guide to equivariant methods in combinatorics II, \textit{Publ. Inst. Math. Belgrade} Vol. 64, No. 78 (1998) 107--132. 


\bibitem{ } R.T. \v Zivaljevi\'c, S.T. Vre\'cica. The ham sandwich theorem revisited, \textit{Israel J. Math.} Vol. 78 (1992) 21--32.







	
\end{thebibliography}

\end{document}